\title[PGD Attacks for Region-of-Attraction Analysis of  Perception-Based Control]{Revisiting PGD Attacks for   Stability Analysis of  Large-Scale \\Nonlinear Systems and Perception-Based Control}
\newcommand{\norm}[1]{\left\|#1\right\|}
\newcommand{\field}[1]{\mathbb{#1}}
\newcommand{\R}{\field{R}}
\DeclareMathOperator*{\argmax}{arg\,max}
\DeclareMathOperator*{\maximize}{maximize}
\DeclareMathOperator*{\minimize}{minimize}
\DeclareMathOperator*{\sat}{sat}
\author{%
 \Name{Aaron Havens}\thanks{Equal Contribution} \Email{ahavens2@illinois.edu}\\
 \Name{Darioush Keivan}\footnotemark[1]  \Email{dk12@illinois.edu}\\
\Name{Peter Seiler} \Email{pseiler@umich.edu}\\
\Name{Geir Dullerud} \Email{dullerud@illinois.edu}\\
\Name{Bin Hu} \Email{binhu7@illinois.edu}
}
\begin{document}

\maketitle

\begin{abstract}%
  Many existing region-of-attraction (ROA) analysis tools find difficulty in addressing feedback systems with large-scale neural network (NN) policies and/or high-dimensional sensing modalities such as cameras.
 In this paper, we tailor the projected gradient descent (PGD) attack method developed in the adversarial learning community as a general-purpose ROA analysis tool for large-scale nonlinear systems and end-to-end perception-based control.
We show that the ROA analysis can be approximated as a constrained maximization problem whose goal is to find the worst-case initial condition which shifts the terminal state the most. Then we present two PGD-based iterative methods which can be used to solve the resultant constrained maximization problem. Our analysis is not based on Lyapunov theory, and hence requires minimum information of the problem structures. In the model-based setting, we show that the PGD updates can be efficiently performed using back-propagation. In the model-free setting (which is more relevant to ROA analysis of perception-based control), we propose a finite-difference PGD estimate which is general and only requires a black-box simulator for generating the trajectories of the closed-loop system given any initial state. We demonstrate the scalability and generality of our analysis tool on several numerical examples with large-scale NN policies and high-dimensional image observations.
We believe that  our proposed analysis serves as a meaningful initial step toward further understanding of closed-loop stability of large-scale nonlinear systems and perception-based control.
\end{abstract}

\begin{keywords}%
Region of attraction, PGD attack, perception-based control, neural networks
\end{keywords}
\section{Introduction}
\label{sec:intro}
In recent years, we have witnessed  tremendous  success of deep reinforcement learning (DRL) in various sequential decision-making applications \citep{silver2016mastering} and continuous  control tasks   \citep{lillicrap2015continuous,schulman2015high}. In the context of control, DRL has two main advantages.
First, DRL provides a general-purpose framework for addressing complex nonlinear dynamics (e.g. contact force, etc). For highly nonlinear dynamics,  one can still parameterize the controller as a neural network (NN) and apply DRL methods to train a useful controller.  Second,  DRL can be applied to train perception-based control systems in an end-to-end manner~\citep{lee2017learning,lee2019stochastic, hafner2019dream, yarats2019improving, yarats2021mastering}. One can parameterize the mapping from the pixel image to the control action as a neural network, and then it is straightforward to apply policy optimization to learn how to control from pixels. Despite these advantages, applications of DRL in real-world control systems are still rare. One issue is that the stability and robustness properties of such DRL-based controllers have not been fully understood. 
There is an urgent need to develop new analysis tools for addressing the stability and robustness of DRL-based control systems.

In this paper, we are interested in analyzing the asymptotic stability and estimating the region of attraction (ROA) of feedback control systems with large-scale NN policies and/or high-dimensional, rich observations (e.g. images).
There are three main technical difficulties. First, the NN policies can have a large number of hidden neurons. Second, the feedforward dynamics of the perception-based control systems are typically not fully known due to the complex mapping from the plant state to the image pixels. Third, in general, the control action  may not just be a function of the current state, and the coupling of system states at different time steps can be complicated. The first two issues will cause trouble for  existing Lyapunov-based ROA analysis methods using semidefinite programming \citep{yin2021stability,hu2020reach,jin2020stability,aydinoglu2021stability} or mixed-integer programs \citep{chen2020learning,chen2021learning,dai2021lyapunov}. Due to the last issue, the methods of Lyapunov neural networks \citep{richards2018lyapunov,chang2019neural,jin2020neural} or other stability certificate learning methods \citep{kenanian2019data,giesl2020approximation,ravanbakhsh2019learning} may also be not applicable since these methods typically require the control action to depend on the current state.
Our goal is to develop a ROA analysis method which can address the above three issues simultaneously.

To achieve our goal, we will borrow the method of projected gradient descent (PGD) attack developed in the adversarial learning  literature \citep{szegedy2013intriguing, goodfellow2014explaining, kurakin2016adversarial, madry2017towards} and tailor it as a general-purpose ROA analysis tool. Originally, 
the PGD attack was developed to find worst-case perturbation (overall some $\ell_p$ ball) that can shift the output of neural networks significantly and degrade the performances of classifiers in computer vision. It has been observed that such PGD-based methods work well for large-scale neural networks and vision-related tasks. In our paper, we build a connection between PGD attack and the ROA analysis. We show that the ROA analysis can be approximated as a constrained maximization problem whose goal is to find the worst-case initial condition (over some $\ell_p$ ball)  which shifts the terminal state most significantly.  Then PGD can be directly applied to solve the resultant maximization problem.  Such a maximization formulation is not based on Lyapunov theory, and hence does not require any particular structures for the underlying dynamical system.
Similar to the applications in computer vision, we find that PGD 
scales well and can address large-scale NN policies.  When the unknown mapping from the plant state to the image pixels and complex coupling between states at different time steps are involved, we propose a finite difference PGD estimation which is general in the sense that it only requires a black-box simulator for generating the trajectories of the closed-loop systems given any initial state. 
Consequently, the proposed method can address the three challenges mentioned above simultaneously and provide a meaningful initial step towards scalable stability analysis for feedback systems with large-scale NN policies and/or high-dimensional, rich observations.
Finally, we present some numerical experiments as well as some concluding remarks.

%


\section{Problem Formulation}
%

In this paper, we are interested in stability analysis of complex nonlinear control systems.
Specifically, consider the following nonlinear dynamical system 
\begin{align}
\label{eq:sys1}
\begin{split}
x_{t+1}&=f(x_t, u_t)\\
y_t& =h(x_t)
\end{split}
\end{align}
where $x_t\in \R^{n_x}$ is the state, $u_t\in\R^{n_u}$ is the control input, and $y_t\in \R^{n_y}$ is the output observation. We are interested in the general nonlinear control setting where $u_t$ is determined by a complex nonlinear mapping $K$ from the history of observation-action pairs over a time window, i.e. $u_t=K\left(y_t, y_{t-1}, u_{t-1}, \ldots,y_{t-N+1}, u_{t-N+1}\right)$ where $N$ is the window length.
We allow the analytical form of  the function $h$ to be unknown and hence $y_t$  is allowed to be a high-dimensional, rich observation from a camera.
 In this case, $y_t$ is just a vector augmented from the image pixel values obtained at time $t$, and we make the assumption that the environment for the image generation is relatively static such that $h$ is deterministic.  For perception-based control systems, the analytical form of $h$, is typically unknown. However, it is reasonable to assume that we have access to an image generator which can simulate the output of $h$ for a given state.

For simplicity, we assume that the system \eqref{eq:sys1} is posed in a way that the equilibrium state is $0$. Our goal is to
analyze the asymptotic stability and estimate the region of attraction (ROA) of the feedback interconnection of \eqref{eq:sys1} and $K$.
Clearly, for any fixed $t$, the state $x_t$ of the closed-loop feedback control system \eqref{eq:sys1} with policy $K$ will be uniquely determined by a mapping from the initial state $x_0$. We denote such a mapping as $g_t$. 
Once $f$, $h$, and $K$ are fixed, $g_t$ will become determined for any $t$.
Then the state trajectory generated by the closed-loop feedback control system satisfies $x_t=g_t(x_0)$ for any $t$.
Now we can define ROA as follows.

\begin{definition}\label{def:roa}
The ROA of the feedback control system \eqref{eq:sys1} with the policy $K$ is defined as
\begin{align}
\mathcal{R}=\{x_0: \lim_{t\rightarrow \infty}g_t(x_0)=0\}
\end{align}
\end{definition}

In this paper, we are interested in finding convex approximations of $\mathcal{R}$ and addressing the following two difficult cases.
\begin{enumerate}
\item Policies parameterized by large-scale neural networks (NNs):  In deep RL, it is popular to parameterize the policy $K$ as a large-scale neural network. The number of the neurons can be fairly large, and this cause a scalability issue for performing ROA analysis. 
\item Control from pixels: For perception-based control systems, it is difficult to figure out the internal mechanism of image generation, and hence the mapping $h$ is typically unknown. Notice that the output of $h$ is typically a high-dimensional signal, and fitting a function to estimate such $h$ is also difficult. In addition, one will need to augment images from different time steps to determine the current control action, and this will also lead to a coupling effect which causes trouble for analysis.
\end{enumerate}

In general, it is very difficult to obtain tight rigorous approximations of $\mathcal{R}$ for large-scale nonlinear/perception-based control systems. In this paper, we borrow the idea of PGD attack to generate initial conditions which do not belong in $\mathcal{R}$ and then construct reasonable ROA approximations using these initial conditions.

\section{Main Analysis Framework}

\subsection{A Constrained Maximization Formulation for Finite-Horizon Approximations of ROA}
In this paper, we are interested in approximating $\mathcal{R}$ as the following parameterized convex set
\begin{align}
\hat{\mathcal{R}}(p, r, C)=\{\xi: \norm{C\xi}_p\le r\}
\end{align}
where $C$ is some prescribed transformation matrix, $r$ quantifies the size of the approximated ROA, and $p$ can be $1$, $2$, or $\infty$.
Notice that $C$ should be full rank such that $C^\top C$ is a positive definite matrix.
When $p=2$, we will have ellipsoidal approximations. Clearly, the set $\hat{\mathcal{R}}(p,r,C)$ is always convex, and hence projection to $\hat{\mathcal{R}}(p,r,C)$ can be easily done. We also want to mention that for convenience, we will drop the subscript ``$2$" in the notation of  the $\ell_2$ norm and just use $\norm{\cdot}$ instead.

If $\hat{\mathcal{R}}(p, r, C)\subset \mathcal{R}$, then we have $\lim_{t\rightarrow \infty}g_t(\xi)=0$ for any $\xi\in \hat{\mathcal{R}}$. Therefore, a necessary and sufficient condition for $\hat{\mathcal{R}}(p, r, C)\subset \mathcal{R}$ is given as follows
\begin{align}
 \max_{\xi\in \hat{\mathcal{R}}(p, r, C)}\left(\limsup_{t\rightarrow \infty}\norm{g_t(\xi)}^2\right)=0.
\end{align}

Checking the above condition numerically will lead to a finite-horizon approximation:
\begin{align}
\label{eq:cond1}
 \max_{\xi\in \hat{\mathcal{R}}(p, r, C)}\norm{g_T(\xi)}^2\le \delta
\end{align}
where $T$ is a prescribed large number and $\delta$ is some fixed small number. We will use \eqref{eq:cond1} as our main criterion for approximating $\mathcal{R}$. Specifically, given $C$, $p$, $r$, $T$, and $\delta$, we will calculate  $ \max_{\xi\in \hat{\mathcal{R}}(p, r, C)}\norm{g_T(\xi)}^2$ and compare the resultant solution with $\delta$.
For a fixed $C$ and $p$, we will perform a bisection on $r$ to find the maximum of $r$ such that \eqref{eq:cond1} is satisfied. Then the resultant $\hat{\mathcal{R}}(p,r,C)$ will be our ROA approximation. 

We formalize the above discussion by defining the $(T,\delta)$-approximated region of attraction (AROA) as follows.
\begin{definition}
\label{def:AROA}
The $(T,\delta)$-AROA is defined as
\[
\tilde{\mathcal{R}}(T,\delta)=\{x_0: \norm{g_T(x_0)}^2\le \delta\}.
\]
\end{definition}


The following lemma gives  a precise characterization of
the relation between $\tilde{\mathcal{R}}(T,\delta)$ and $\mathcal{R}$.
\begin{lemma}
For any fixed $T$ and $\delta>0$, we have
\begin{align}
\label{eq:unionR}
\bigcap_{t\ge T}\tilde{\mathcal{R}}(t,\delta)=\{x_0:\norm{g_t(x_0)}^2\le \delta,\,\,\,\forall t\ge T\}.
\end{align}
The sequence of sets $\left\{\bigcap_{t\ge T}\tilde{\mathcal{R}}(t,\delta)\right\}_{T=0}^\infty$ is monotonically increasing to $\liminf_{T\rightarrow \infty}\tilde{\mathcal{R}}(T,\delta)$.
In addition, $\liminf_{T\rightarrow \infty}\tilde{\mathcal{R}}(T,\delta)$ is monotonically decreasing in $\delta$, and the following limit holds
\begin{align}
\label{eq:Rlimit}
\mathcal{R}=\lim_{\delta\rightarrow 0}\left(\liminf_{T\rightarrow \infty}\tilde{\mathcal{R}}(T,\delta)\right).
\end{align}
\end{lemma}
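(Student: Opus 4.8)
The plan is to dispatch the four assertions in turn, reducing each to elementary manipulations of the level sets $\tilde{\mathcal{R}}(t,\delta)=\{x_0:\norm{g_t(x_0)}^2\le\delta\}$, and to invoke at the very end the fact that a nonnegative sequence is eventually bounded by \emph{every} $\delta>0$ if and only if it converges to $0$.

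First I would prove \eqref{eq:unionR}, which is purely definitional: $x_0\in\bigcap_{t\ge T}\tilde{\mathcal{R}}(t,\delta)$ means $x_0\in\tilde{\mathcal{R}}(t,\delta)$ for every $t\ge T$, i.e. $\norm{g_t(x_0)}^2\le\delta$ for all $t\ge T$, which is exactly the right-hand side. Next, for monotonicity in $T$, I would note that $\bigcap_{t\ge T}\tilde{\mathcal{R}}(t,\delta)$ is an intersection over a shrinking index set as $T$ grows, so $\bigcap_{t\ge T}\tilde{\mathcal{R}}(t,\delta)\subseteq\bigcap_{t\ge T+1}\tilde{\mathcal{R}}(t,\delta)$; hence the sequence indexed by $T$ is nondecreasing, and a nondecreasing sequence of sets converges to its union. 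Since, by the definition of the set $\liminf$, $\liminf_{T\to\infty}\tilde{\mathcal{R}}(T,\delta)=\bigcup_{T\ge0}\bigcap_{t\ge T}\tilde{\mathcal{R}}(t,\delta)$, this union is precisely the claimed limit.

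For the dependence on $\delta$, I would observe directly from Definition~\ref{def:AROA} that $\delta_1\le\delta_2$ implies $\tilde{\mathcal{R}}(t,\delta_1)\subseteq\tilde{\mathcal{R}}(t,\delta_2)$ for every $t$; this pointwise inclusion is preserved by the operations $\bigcap_{t\ge T}$ and $\bigcup_{T\ge0}$, so $S_\delta:=\liminf_{T\to\infty}\tilde{\mathcal{R}}(T,\delta)$ is monotone in $\delta$, shrinking as $\delta\downarrow0$. Consequently the limit in \eqref{eq:Rlimit} is unambiguous and equals $\bigcap_{\delta>0}S_\delta$ (take any sequence $\delta_n\downarrow0$; monotonicity gives $\bigcap_n S_{\delta_n}=\bigcap_{\delta>0}S_\delta$). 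Finally, unfolding the definitions, $x_0\in S_\delta$ iff there exists $T$ with $\norm{g_t(x_0)}^2\le\delta$ for all $t\ge T$, and therefore $x_0\in\bigcap_{\delta>0}S_\delta$ iff for every $\delta>0$ one has $\norm{g_t(x_0)}^2\le\delta$ for all sufficiently large $t$, which — since $\norm{g_t(x_0)}^2\ge0$ — is exactly $\lim_{t\to\infty}\norm{g_t(x_0)}^2=0$, i.e. $x_0\in\mathcal{R}$ by Definition~\ref{def:roa}.

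The statement is elementary and I do not expect a genuine obstacle; the only place demanding care is this last step, where the order of quantifiers matters. One should resist the temptation to identify $S_\delta$ itself with $\{x_0:\limsup_{t\to\infty}\norm{g_t(x_0)}^2\le\delta\}$: these differ when the $\limsup$ is attained only from above (e.g. $\norm{g_t(x_0)}^2=\delta+1/t$). After intersecting over all $\delta>0$ the discrepancy vanishes, and it is precisely here that nonnegativity and the standard $\varepsilon$-$\delta$ reformulation of convergence to $0$ enter.
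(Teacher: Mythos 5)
Your proposal is correct and follows essentially the same route as the paper's proof: the identity \eqref{eq:unionR} and the two monotonicity claims are handled by direct unfolding of definitions, and the main identity \eqref{eq:Rlimit} is reduced, exactly as in the paper, to the $\varepsilon$-characterization of $\lim_{t\to\infty}g_t(x_0)=0$ (you phrase the two inclusions as a single chain of equivalences after identifying the double limit with $\bigcap_{\delta>0}S_\delta$, which is a harmless repackaging). Your closing caveat distinguishing $S_\delta$ from the $\limsup$-level set is a nice extra precision not present in the paper, but not needed for the result.
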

\begin{proof}
Most statements in the above lemma can be verified trivially. The proof of \eqref{eq:Rlimit} is less straightforward and hence included here. First, we will show $\mathcal{R}\subset\lim_{\delta\rightarrow 0}\left(\liminf_{T\rightarrow \infty}\tilde{\mathcal{R}}(T,\delta)\right)$. Suppose $x_0\in \mathcal{R}$. By definition, we have $\lim_{t\rightarrow \infty} g_t(x_0)=0$. Therefore, for any $\delta_0>0$, there exists $T$ such that $\norm{g_t(x_0)}^2\le \delta_0$ for all $t\ge T$. This means $\mathcal{R}\subset \liminf_{T\rightarrow \infty}\tilde{\mathcal{R}}(T,\delta_0)$ for any $\delta_0>0$. Then we can let $\delta_0$ approach $0$ and have $\mathcal{R}\subset\lim_{\delta\rightarrow 0}\left(\liminf_{T\rightarrow \infty}\tilde{\mathcal{R}}(T,\delta)\right)$. Next, we will show $\lim_{\delta\rightarrow 0}\left(\liminf_{T\rightarrow \infty}\tilde{\mathcal{R}}(T,\delta)\right)\subset \mathcal{R}$. Suppose $x_0\in \lim_{\delta\rightarrow 0}\left(\liminf_{T\rightarrow \infty}\tilde{\mathcal{R}}(T,\delta)\right)$. For any arbitrary $\delta_0>0$, it is straightforward to verify $ \lim_{\delta\rightarrow 0}\left(\liminf_{T\rightarrow \infty}\tilde{\mathcal{R}}(T,\delta)\right)\subset \liminf_{T\rightarrow \infty}\tilde{\mathcal{R}}(T,\delta_0)$. This means that for any arbitrary $\delta_0>0$, there exists $T>0$ such that $\norm{g_t(x_0)}^2\le \delta_0$ for $t\ge T$. Therefore, we have $\lim_{t\rightarrow \infty} g_t(x_0)=0$. This leads to the desired conclusion and completes the proof.
\end{proof}

Based on \eqref{eq:Rlimit}, it is reasonable to  estimate $\mathcal{R}$ from $\tilde{R}(T,\delta)$ with some small $\delta$ and large $T$. From a practical point of view,  Definition \ref{def:AROA} also makes sense since engineering systems are run on finite-time windows. Stability and ROAs defined on infinite horizons provide meaningful abstractions for quantifying the resilience property of many feedback control systems. However, we will show that the finite-horizon notion of ROA will provide complementary benefits from a computational perspective. Specifically, verifying whether $\hat{\mathcal{R}}(p,r,C)\subset \tilde{\mathcal{R}}(T,\delta)$ is equivalent to a constrained maximization problem which can be approximately solved in a scalable manner using gradient-based methods.
Our finite-horizon approach will not give a rigorous inner approximation of $\mathcal{R}$. However, if we choose $T$ and $\delta$ carefully, the above constrained maximization approach will lead to scalable solutions for estimating ROA of complex nonlinear systems and perception-based control. We will elaborate on this point later.

\subsection{PGD Attack for ROA Approximation}\label{sec:pgd_attack}
From the above discussion, the ROA analysis is formulated as the following maximization problem
\begin{align}
\label{eq:maxGT}
\maximize_{\xi\in \hat{\mathcal{R}}(p,r,C)} \norm{g_T(\xi)}^2
\end{align}
Denote $\xi^*=\argmax_{\xi\in \hat{\mathcal{R}}(p,r,C)} \norm{g_T(\xi)}^2$.
We will perform bisection on $r$ to find the largest $r$ such that $\norm{g_T(\xi^*)}^2\le \delta$, and then we  will use the resultant set $\hat{\mathcal{R}}(p, r, C)$ to approximate the ROA. 

The key to our analysis is that we can apply PGD to solve $\xi^*$. 
Denote $L_T(\xi)=\norm{g_T(\xi)}^2$.
In addition, for any convex set $S$, we use $\Pi_S$ to denote the projection onto $S$.
Then PGD iterates as\footnote{Technically speaking, projected gradient ascent is needed for maximization problems. However, the terminology PGD is still used here such that our paper is consistent with the adversarial learning literature.}
\begin{align}
\label{eq:PGD}
\xi^{k+1}=\Pi_{\hat{\mathcal{R}}(p,r,C)}\left(\xi^k+\alpha \nabla L_T(\xi^k)\right).
\end{align}
One way to interpret \eqref{eq:PGD} is that it recursively solves the following approximated form of \eqref{eq:maxGT}:
\begin{align}
\label{eq:expan1}
\minimize_{\xi\in \hat{\mathcal{R}}(p,r,C)} \left\{-L_T(\xi^k)-\nabla L_T(\xi^k)^\top (\xi-\xi^k)+\frac{1}{2\alpha}\norm{\xi-\xi^k}^2\right\}
\end{align}
where an $\ell_2$ regularizer is added to the first-order Taylor expansion of $(-L_T)$ around $\xi^k$. The exact solution for \eqref{eq:expan1} is given by $\Pi_{\hat{\mathcal{R}}(p,r,C)}\left(\xi^k+\alpha \nabla L_T(\xi^k)\right)$ which is used as the next iterate $\xi^{k+1}$.

Another popular way to solve \eqref{eq:maxGT} recursively is to approximate \eqref{eq:maxGT} as
\begin{align}
\label{eq:expan2}
\xi^{k+1}=\argmax_{\norm{C\xi}_p^2=r^2} \left\{L_T(\xi^k)+\nabla L_T(\xi^k)^\top (\xi-\xi^k)\right\}
\end{align}
where the $\ell_2$ regularizer is removed and the inequality constraint $\norm{C\xi}_p^2\le r^2$ is replaced with an equality condition  $\norm{C\xi}_p^2=r^2$. 
Using an equality constraint makes sense for the ROA analysis since we will perform a bisection on $r$ and the ROA of interests is typically in the form of continuum. 
When $p=2$, the constraint is just $\norm{C\xi}^2= r^2$, and a closed-form solution for \eqref{eq:expan2} is given as
\begin{align}
\label{eq:PI}
\xi^{k+1}=\frac{r}{\norm{ C^{-\top}\nabla L_T(\xi^k)}} (C^\top C)^{-1} \nabla L_T(\xi^k)
\end{align}
To see this, we apply the Lagrange multiplier theorem to \eqref{eq:expan2} and obtain the following conditions
\begin{align*}
-\nabla L_T(\xi^k)+2\lambda C^\top C\xi&=0\\
\norm{C\xi}&=r
\end{align*}
where $\lambda$ is the Lagrange multiplier. Since we are solving a maximization problem, we can immediately obtain the formula \eqref{eq:PI}. The above update is typically more efficient since it exploits the prior intuition that the worst-case initial points should be on the boundary for the maximized $r$. There is also a deep connection between \eqref{eq:PI} and the so-called alignment condition in the controls literature \citep{tierno1997numerically}, since the initial condition can be viewed as an input applied at the initial step.

For $p=2$, the implementation of \eqref{eq:PI} is straightforward. For other values of $p$, \eqref{eq:expan2} can also be applied. We discuss the update rules for theses cases in the appendix. Another subtle issue is how to choose $T$. Notice that $T$ cannot be too large. Otherwise the gradient $\nabla L_T(\xi)$ may be too small for any $\xi\in \mathcal{R}$ and this makes finding $\xi^*$ more difficult. Decreasing $T$ is actually smoothing the cost function $L_T$ and makes the optimization easier. However, we also cannot make $T$ be too small. Otherwise $\tilde{\mathcal{R}}(T,\delta)$ is no longer a good estimate for $\mathcal{R}$.  We will discuss more in our numerical study.

\begin{remark}\label{rem1}
Our proposed analysis only provides an approximation for the true ROA. Obviously, there is a gap between $\mathcal{R}$ and $\tilde{\mathcal{R}}(T,\delta)$. If $T$ and $\delta$ are well chosen, the approximation error induced by such a gap will be small. Then we can just verify whether $\hat{\mathcal{R}}(p,r,C)\subset\tilde{\mathcal{R}}(T,\delta)$, and
the optimization error in solving \eqref{eq:maxGT} will become a more dominant factor.  In general, \eqref{eq:maxGT} is a non-concave maximization problem. If $\hat{\mathcal{R}}(p,r,C)\not\subset\tilde{\mathcal{R}}$, we can verify this by finding one initial condition satisfying $L_T(\xi)>\delta$. This is relatively easy since this case does not require us to solve the non-concave maximization problem \eqref{eq:maxGT} exactly. It is expected that PGD with random initialization can provide an efficient search method to find such initial conditions. However, to verify $\hat{\mathcal{R}}(p,r,C)\subset\tilde{\mathcal{R}}$,  we need to find the global maximum of \eqref{eq:maxGT} and compare it with $\delta$. In general, there lacks strong theoretical guarantees for finding the global solutions for such non-concave maximization problems. 
One useful heuristic fix is to run PGD with different random initial conditions and treat the worst-case value as an approximate solution for \eqref{eq:maxGT}. Despite the lack of strong global guarantees, our simulation study shows that the ROA approximations from PGD  are good estimates of the true ROAs in many situations. More theoretical study is needed to better understand our numerical findings.
\end{remark}

Both \eqref{eq:PGD} and \eqref{eq:PI} are inspired by existing attack methods in the adversarial learning literature \citep{madry2017towards,goodfellow2014explaining}. The sign gradient information is typically needed for image classification problems while our ROA analysis will directly use the true gradient. 
For both \eqref{eq:PGD} and \eqref{eq:PI}, the key step is to evaluate $\nabla L_T(\xi^k)$.  Next, we discuss how to perform such gradient evaluation for nonlinear systems with large-scale NN policies and/or image observations.

\subsection{Model-Based ROA for Large-Scale NN Policy: PGD with Back-Propagation}

First, as a sanity check, we consider the relative simple case where $u_t=K\circ h(x_t)$ with the analytical forms of both $K$ and $h$ being known apriori. Here the operation $\circ$ denotes the composition of two maps.
In this case, we can use back-propagation to evaluate $\nabla L_T(\xi)$ efficiently. 
The feedback system reduces to the autonomous form $x_{t+1}=f(x_t, K\circ h(x_t))$. For simplicity, we denote $\tilde{f}(x)=f(x, K\circ h(x))$. 
We also denote $\tilde{f}^{(0)}$ to be the identity map and set $\tilde{f}^{(n+1)}=\tilde{f}\circ \tilde{f}^{(n)}$.
Then we have $x_t=\tilde{f}^{(t)}(x_0)$, and $L_T(x_0)=\norm{\tilde{f}^{(t)}(x_0)}^2$. Then $\nabla L_T(\xi)$ can be evaluated using back-propagation. Specifically, we can introduce the costate $p_t$, and
 define the Hamiltonian as
$H(x, p):=p \tilde{f}(x).$
Then the following result holds.
\begin{lemma}
Set $x_0=\xi$ and generate the state sequence as $x_{t+1}=\tilde{f}(x_t)$ for $t=0, 1,2, \ldots, T-1$. Next, set $p_T=2x_T$ and generate the costate sequence in a backward manner, i.e.
\[
p_t=\nabla_x H(x_t, p_{t+1})\,\,\,\,\,\mbox{for} \,\,\,\,0\le t\le T-1
\]
Then we have $\nabla L_T(\xi)=p_0$.
\end{lemma}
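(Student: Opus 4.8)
The plan is to establish $\nabla L_T(\xi)=p_0$ by backward induction on $t$, identifying each costate $p_t$ with the gradient of the ``tail cost'' regarded as a function of the intermediate state $x_t$. Concretely, I would introduce $\phi_t:\R^{n_x}\to\R$ defined by $\phi_t(x)=\norm{\tilde f^{(T-t)}(x)}^2$, so that $\phi_0=L_T$ and $\phi_T(x)=\norm{x}^2$. Using the relation $\tilde f^{(T-t)}=\tilde f^{(T-t-1)}\circ\tilde f$, one gets the one-step identity $\phi_t=\phi_{t+1}\circ\tilde f$. The claim to prove is that $p_t=\nabla\phi_t(x_t)$ for all $0\le t\le T$ along the forward trajectory $x_{t+1}=\tilde f(x_t)$ started at $x_0=\xi$; specializing to $t=0$ then gives $p_0=\nabla\phi_0(x_0)=\nabla L_T(\xi)$.

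For the base case $t=T$ I would simply compute $\nabla\phi_T(x)=\nabla_x\norm{x}^2=2x$, so $\nabla\phi_T(x_T)=2x_T=p_T$, matching the prescribed terminal condition. For the inductive step, suppose $p_{t+1}=\nabla\phi_{t+1}(x_{t+1})$ for some $t$ with $0\le t\le T-1$. Differentiating $\phi_t=\phi_{t+1}\circ\tilde f$ by the chain rule yields $\nabla\phi_t(x)=\bigl(D\tilde f(x)\bigr)^{\!\top}\nabla\phi_{t+1}\bigl(\tilde f(x)\bigr)$, where $D\tilde f$ denotes the Jacobian of $\tilde f$. Evaluating at $x=x_t$ and using $x_{t+1}=\tilde f(x_t)$ together with the inductive hypothesis gives $\nabla\phi_t(x_t)=\bigl(D\tilde f(x_t)\bigr)^{\!\top}p_{t+1}$. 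It then remains to verify that the Hamiltonian recursion reproduces exactly this quantity: from $H(x,p)=p\,\tilde f(x)$ one obtains $\nabla_x H(x,p)=\bigl(D\tilde f(x)\bigr)^{\!\top}p$, so $p_t=\nabla_x H(x_t,p_{t+1})=\bigl(D\tilde f(x_t)\bigr)^{\!\top}p_{t+1}=\nabla\phi_t(x_t)$, closing the induction.

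The only real obstacle here is bookkeeping rather than mathematics: one must fix a convention for whether the costate $p_t$ is a row or a column vector so that the scalar $H(x,p)=p\,\tilde f(x)$, the initialization $p_T=2x_T$, and the gradient $\nabla L_T$ are all mutually consistent, and then insert the transpose in the chain rule accordingly. A secondary point worth stating is differentiability: the argument uses that $\tilde f(x)=f\bigl(x,K\circ h(x)\bigr)$ is differentiable so that $D\tilde f$ and the chain rule are valid along the trajectory, which holds for the smooth (or piecewise-differentiable, handled along the trajectory's generic points) NN policies considered here. Everything else --- the base case and the single chain-rule computation --- is routine once these conventions are pinned down.
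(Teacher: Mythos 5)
Your proof is correct and is exactly the argument the paper alludes to: the paper simply cites Proposition 5 of Li et al.\ and remarks that ``one can directly apply the chain rule,'' omitting the details, while you have filled in precisely that chain-rule/backward-induction computation (identifying $p_t$ with the gradient of the tail cost $\phi_t$). Your remarks on the row-vs-column convention for $p$ in $H(x,p)=p\,\tilde f(x)$ and on the differentiability of $\tilde f$ are the right caveats, and nothing further is needed.
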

\begin{proof}
The above result can be viewed as a special case of Proposition 5 in \cite{li2018maximum}. One can also directly apply the chain rule to verify the above result. The details are omitted. 
\end{proof}
The above result states that one can just run the forward dynamics and then make a back-propagation to calculate the $\nabla L_T(\xi)$. When $f$, $h$, and $K$ are known, one can write out explicit expressions for $\nabla_x H$ and hence the above gradient evaluation can be efficiently performed.

\begin{remark}
If $f$ is a linear function and $K\circ h$ is a neural network, one can further evaluate $\nabla_x H$ using back-propagation. Tools such as PyTorch or JAX can be used to perform the auto-differentiation~(\cite{paszke2019pytorch, jax2018github}). Such an evaluation can be easily scaled up to address NN policies with more than thousand neurons. 
\end{remark}

Based on the above lemma, we can combine either \eqref{eq:PGD} or \eqref{eq:PI} with the back-propagation rule to solve $\xi^*$.
For perception-based control systems, the analytical form of the mapping $h$ is typically not known. 
In addition, $y_t$ may depend on the past state and action. The above back propagation approach may no longer work. 
One may argue that one can try to fit a model on $h$. However, such a fitting may not be always feasible. In addition, even if we can fit such a model, the dimension of $y_t$ can be high and the required back-propagation step becomes much more expensive. 
This motivates us to use a model-free approach introduced next.

\subsection{Model-Free ROA Analysis: Derivative-Free Optimization for Perception-Based Control}
In this section, we present a model-free approach to address the case where the analytical forms of $f$ and $h$ are unknown but a black-box simulator for $g_T$ is available. Specifically, we assume that we can simulate the closed-loop system and generate $g_T(\xi)$ for any given $T$ and $\xi$. For perception-based control systems, the dimension of $y_t$ is high. However, a key fact is that $g_T(\xi)$ is a function of $\xi$ which lives in a space with much lower dimension. This motivates us to apply the finite difference estimation for the gradient evaluation. Specifically, we have $\nabla L_T(\xi^k)\approx \Gamma$ where the $j$-th entry of $\Gamma$ can be estimated using the following finite difference scheme:
\begin{align}
\label{eq:finitedifference}
\Gamma(j)=\frac{L_T(\xi^k+\epsilon e_j)-L_T(\xi^k)}{\epsilon}.
\end{align}
Notice $e_j$ denotes a vector whose entries are
all $0$ except the $j$-th entry which is $1$. Therefore, we need to simulate the trajectories for $(n_x+1)$ times. For systems with reasonable state dimension, such a gradient evaluation is scalable. 

Notice that the above approach is not useful when trying to find the worst-case attack on the observation $y_t$. The dimension of $y_t$ is really high, and a finite difference estimation  will not be scalable on the space of $y_t$. For the purpose of ROA analysis, such an approach scales with the dimension of $\xi$ and can generalize as long as $n_x$ is not too big. 

The above finite difference method is extremely general. In principle, as long as we can have a black-box simulator for the closed-loop system, we can apply the finite difference method without knowing any underlying dynamic structures. Such an approach is mostly useful for perception-based control systems with complex mapping $h$.

\section{Experimental Results}
In this section, we present numerical results to demonstrate the effectiveness of our proposed PGD-based analysis in the following two settings. 
\begin{itemize}
    \item Perception-based Control: We consider several examples where control actions are directly determined from RGB-image pixels (see Figure \ref{fig:image_feedback}). In Section \ref{sec:inverted_pendulum}, we present our ROA analysis results for perception-based control of nonlinear inverted pendulum. In Section \ref{sec:cartpole}, perception-based control of cartpole systems (with single and double links) is studied.  The polices for these problems are parameterized using large-scale NNs, and our results show that our PGD approach is highly-flexible and effective for perception-based control. As a sanity check, for the inverted pendulum problem, we have also included some analysis results for the relatively simple state-feedback setting
where our approach can be justified via a comparison with the Lyapunov-based inner approximation approach developed in \citet{yin2021stability}.

    \item Nonlinear System Analysis with High-Dimensional States: We also study how our proposed approach scales with the system state dimension. In Section \ref{sec:cubic}, we consider a cubic systems example with a known ROA.
Unlike the sum-of-squares (SOS) approaches, our PGD approach can scale reasonably to higher state dimensions. We characterize the scaling capability of our approach up to the case where the state dimension is $1000$.
\end{itemize}

\begin{wrapfigure}{r}{0.5\textwidth}
    \centering
    \includegraphics[width=0.5\textwidth]{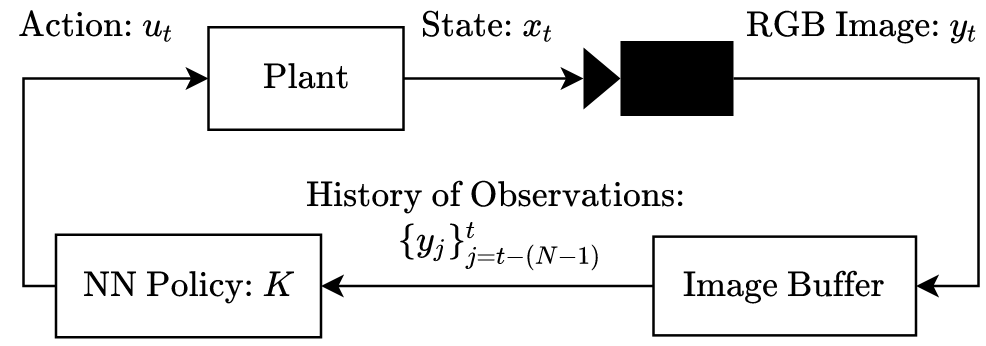}
    \caption{A diagram depicting the image-based feedback setup.}
    \label{fig:image_feedback}
\end{wrapfigure}
For both settings, we will apply the PGD update rule \eqref{eq:PI} with the finite difference gradient estimator \eqref{eq:finitedifference} to perform ROA analysis. More discussions on the alternative PGD update rule \eqref{eq:PGD} will be given in the appendix.
Before proceeding to the concrete examples below, we briefly clarify the perception-control setting considered in our paper. The perception-based feedback control loop is visualized in Figure~\ref{fig:image_feedback}.
The plant can be any nonlinear systems (e.g. inverted pendulum, cartpole, etc). A camera is used to measure the system output. The environment is assumed to be static such that the mapping from the system state to the RGB images is time invariant. This is a reasonable assumption, and similar settings have been adopted in many simulation environments for perception-based control  \citep{tunyasuvunakool2020dm_control,xu2021learned}.
The controller uses the last $N$ $84\times 84$ RGB images, which are generated by the \textit{Deepmind Control Suite}~\citep{tunyasuvunakool2020dm_control}. 
 We train each perception-based controller using the novel image-augmentation training procedure from~\cite{yarats2021mastering}, and utilize the model-free RL algorithm Soft-Actor-Critic (SAC)~\citep{haarnoja2018soft} as the policy optimizer. 
Within the controller $K$,
a policy network is prepended by a four-layer CNN encoder with $3\times 3$ kernels and $32$ channels, applying ReLU activations at each CNN layer. The output of the CNN encoder is normalized and then fed to the fully-connected four-layer ReLU policy network with $1024$ neurons.

\subsection{Inverted-Pendulum}\label{sec:inverted_pendulum}
Now we apply our proposed ROA analysis to the nonlinear inverted-pendulum problem.
 Although the plant dynamics are relatively simple, the output $y_t$ could be some high-dimensional and complex function of $x_t$. It is convenient for exposition to look at this $2D$ system since the true ROA can be readily evaluated and visualized. 
We discretize the well-known inverted pendulum dynamics using a simple Euler scheme with a sample time of $dt=0.02$s, and obtain the following state-space model
\begin{align}\label{eq:inverted_pen_dyn}
    \begin{bmatrix}
    \theta_{t+1}\\
    q_{t+1}
    \end{bmatrix}
    = 
    \begin{bmatrix}
    \theta_t + q_t dt\\
    q_t +\left( \frac{g}{l}\sin \theta_t - \frac{\mu}{m l^2} q_t + \frac{1}{m l^2} \sat(u_t)\right) dt
    \end{bmatrix},
\end{align}
where $\theta_t$ is the pendulum angle, $q_t$ is the angular velocity, $u_t$ is the control action, $m$ is the pendulum rod mass, $l$ is the length, $\mu$ is a damping coefficient, and $\sat(\cdot)$ is a saturation function capturing the saturation limit on the control action. We augment $(\theta_t, q_t)$ to get the state $x_t$. 
The controller is designed in a way such that  the equilibrium point for the closed-loop system is $x_e=0$. Our goal is to estimate the ROA by searching for the worst-case initial condition $x_0$ via our proposed PGD method. As a sanity check, we will first study the state-feedback setting where $u_t=K(x_t)$ with $K$ being some fully-connected NN, and compare our PGD analysis against a recently-developed quadratic constraint approach. Then we consider the image-based setting where an even larger NN policy is learned using the previous two RGB images directly as input, i.e. $u_t=K(y_t, y_{t-1})$ where $K$ consists of a policy network and a CNN encoder.
%

As pointed out in Remark \ref{rem1},
our approach only provides an approximation for the true ROA. Due to the gap between $\mathcal{R}$ and $\tilde{\mathcal{R}}(T, \delta)$, our approach can not provide rigorous inner approximation of the true ROA.
Although our approach
 does not
provide a provable certificate for the traditional notion of Lyapunov stability,
we will see that our PGD-based ROA analysis is often fairly tight and does lead to reasonable approximations of the true ROA.

\paragraph{Sanity check: Comparison with the quadratic constraint approach.} 
As a sanity check, we first provide some results for the state-feedback setting where our approach can be justified via a comparison with the quadratic constraint approach developed in ~\cite{yin2021stability}. The experimental setup is identical to Example IV.A in ~\cite{yin2021stability}, and we adopt the same policy for the purpose of comparison. 
In ~\cite{yin2021stability}, quadratic constraints and semidefinite programs are leveraged to obtain  provable inner ROA estimations for feedback systems with NN controllers. This approach amounts to isolating the nonlinearity of the NN policies with local sector constraints which are next used to formulate ROA analysis conditions in the form of linear matrix inequalities (LMIs). The volume of the ellipsoid inner approximations of the ROA can be maximized by solving the resultant LMIs. Although this LMI approach does not scale well for networks over a thousand parameters and cannot be directly applied  to the perception-based setting, it provides a meaningful baseline for the state-feedback setting. We apply our PGD analysis to Example IV.A in ~\cite{yin2021stability} and compare our results with the ROA inner approximations obtained from the LMI approach. From the left plot in Figure \ref{fig:nn_image_phase}, we can see that our PGD-based ROA approximations
are less conservative then the provable inner-estimates obtained from the quadratic constraint approach.
 Both the spherical and ellipsoid ROA approximations found by our approach are larger than the ROA estimate provided by the provable LMI approach. 
This is expected since  our PGD method lets us consider the full nonlinear dynamics of the NN controller and the pendulum without relaxing them via quadratic constraints. 
One downside to our PGD approach is that we must fix the ellipsoid parameterization before-hand and there is no general way to optimize over all possible ellipsoids which scales to arbitrary dimensions. However, even naive approaches like random sampling may provide ellipsoid parameterizations that significantly improve upon the volume of the spherical ROA parametrization.  It is worth emphasizing that our approach provides a much less restrictive and general framework for ROA analysis that does not require careful case-by-case treatment of nonlinearites. Next, we apply our PGD method to the perception-based control setting where the LMI approach cannot be directly applied.


\begin{figure}[ht!]
    \centering
    \includegraphics[width=0.49\textwidth]{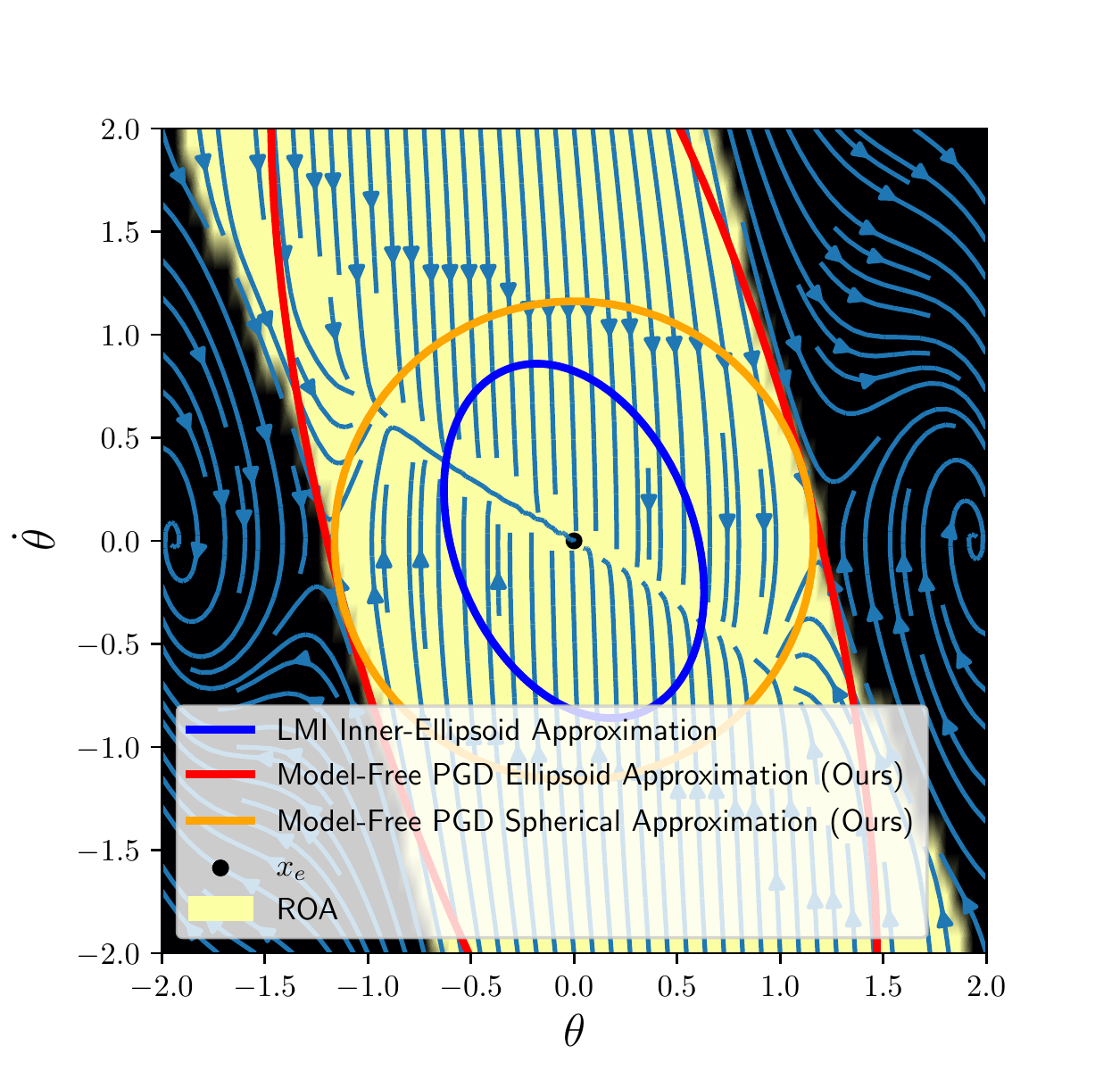}
    \includegraphics[width=0.49\textwidth]{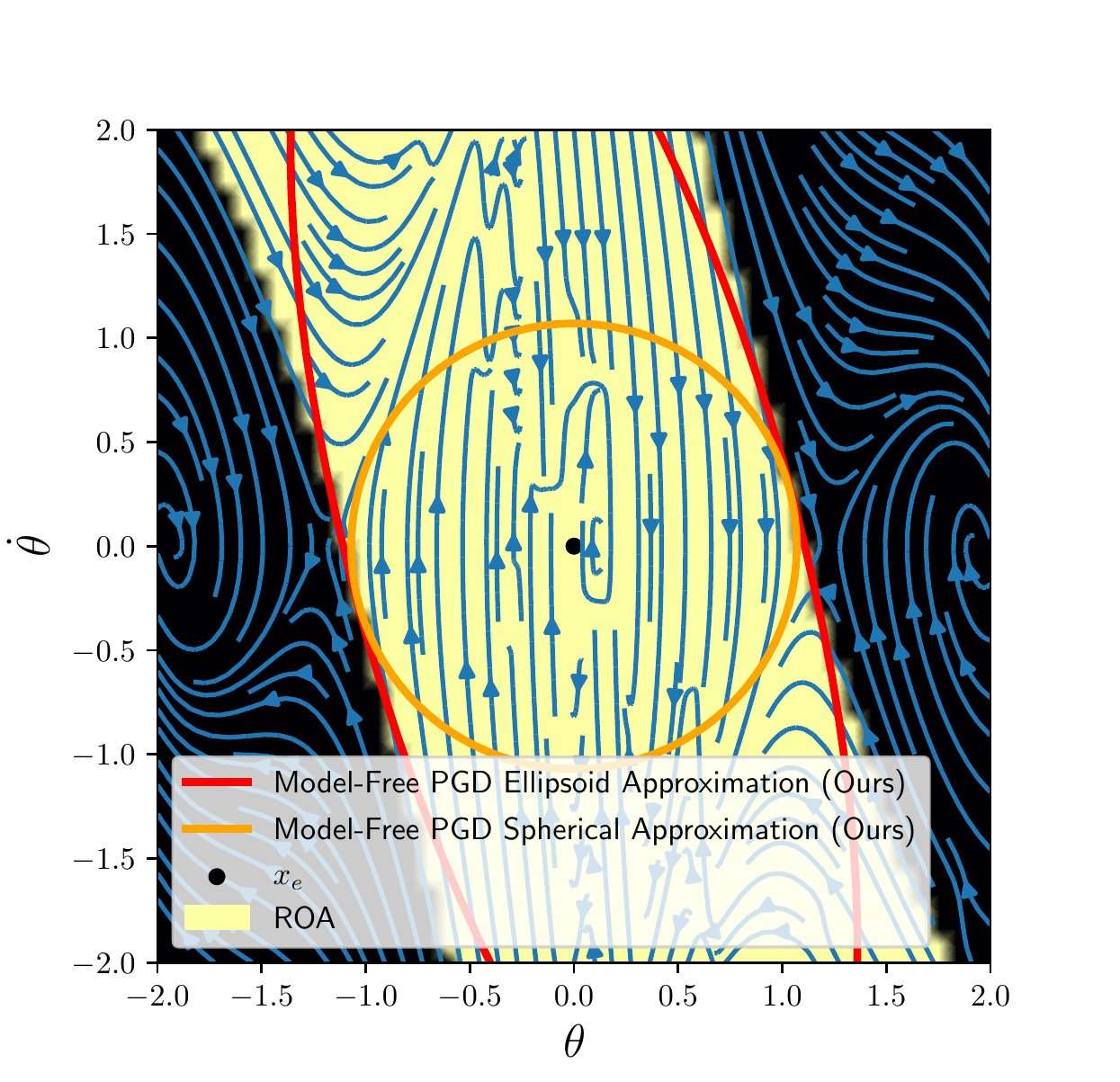}
    \caption{\textbf{Left}: We evaluate the PGD-based approach on the state-feedback inverted-pendulum control for spherical and ellipsoid parameterizations against the true ROA and LMI inner-ellipsoid estimate. The ROA approximation found by our approach subsume the LMI estimate with a volume several times larger. \textbf{Right}: The perception-based NN Controller on inverted-pendulum is evaluated, where the controller maps two previous $84\times 84$-dimensional RGB images to actions. We plot the ROA approximations, which is only compared to the true ROA since the LMI method is not applicable in this situation.}
    \label{fig:nn_image_phase}
\end{figure}

\paragraph{Perception-based model-free ROA analysis.}
Now we apply the PGD approach to compute the ROA estimate in the perception-based control setting where the underlying dynamic model of the inverted-pendulum is still given by~\eqref{eq:inverted_pen_dyn}.
In this case,
using a single RGB image to determine the current control action is not a good idea, since the velocity information is not contained.
We now adopt the NN controller architecture $K$ to be a function of the last two $84\times84$ RGB pixel images of the pendulum.  
It is assumed that the observation $y_t$ is generated by some unknown static function~$h$. We can only simulate the plant which generates these images from the state as depicted in Figure~\ref{fig:image_feedback}. 
For convenience, we use the \textit{Deepmind Control Suite}  to simulate the perception-based inverted-pendulum control system. The model parameters  $(m,l,\mu)$ and the saturation limit are set to be the same as the ones used in the previous state-feedback example.
Again, we use the model-free finite-difference method to estimate the gradient $\nabla L_T$. In this situation, the dimension of the decision variable $\xi=x_0$ is $2$ which is much lower than the dimension of the output variable ($=2 \times 84^2 \times 3 = 42336$). The model-free approach allows us to estimate $\nabla L_T$ without concerning the complexity of the image map $h$. As long as one can simulate the closed-loop dynamics and generate outputs from any initial state, $\nabla L_T$ may be evaluated efficiently in a model-free manner.
Using SAC, we can readily learn a controller that maps the pixel values to actions.  As shown in the right plot of Figure~\ref{fig:nn_image_phase}, the resultant perception-based controller performs similarly to the state-feedback case and has a similar true ROA. 
Based on our PGD analysis, we obtain very similar ROA estimates, despite the complexity of the unknown image generation mapping $h$ and the increased size of the NN architecture (from $32$ hidden units to $256$). Although our ROA estimate is not a provable inner-estimate, this example shows that we are able to handle quite general cases without introducing conservatism.

\subsection{Cart-Pole Systems with Single and Double Links}
\label{sec:cartpole}

By adding a translation degree of freedom to the pendulum example and considering additional rotational links, we can obtain a higher-dimensional cart-pole system with up to $6$ states. This is a useful example since there is an obvious equilibrium point at the upright position which can be studied. Similarly to the pendulum experiment, we study this cart-pole system with state-feedback and perception-based controllers. For convenience, we use the \textit{Deepmind Control Suite} with its default setting of model parameters to simulate this cart-pole example. To capture the physical limit of the actuators, we set the saturation limit of the control action to be $1$.
Again, we adopt the same NN architecture from~\cite{yarats2021mastering} for the purpose of perception-based control.
The controller is trained using SAC, where the objective is to balance the double-link cartpole starting near the equilibrium point. In order to avoid issues with discontinuities in the rotational states, the Euclidean representation of each pole angle is instead given by $(\cos\theta_i,\sin \theta_i)$.
Note that, although it may be possible to apply the LMI approach from~\cite{yin2021stability} to this cartpole system in the state-feedback setting, we would need to formulate system-specific quadratic constraints to handle the nonlinear dynamics. For the NN controllers we consider, the LMI would be quite large, leading to scalability issues. Therefore, we skip the comparison with the LMI approach for this example.
Using a spherical ROA parameterization (i.e. $C=I$), we obtain ROA estimates via the PGD approach ~\eqref{eq:PI} for both the state-feedback and perception-based controllers. 
\paragraph{Single-link case.}
For the single-link cartpole system, we perform a very similar analysis to the inverted-pendulum experiment. For simplicity, we only consider spherical estimates. In order to visualize the ROA, we take a two-dimensional slice through the equilibrium point, fixing the cart position and velocity to be zero. We use $T=200$ and $\delta=10^{-1}$ for the approximate ROA and run the model-free finite-difference method for both state-feedback and image-based experiments. 
The results are given in Figure \ref{fig:cartpole_roa_compare}.
For the the state-feedback controller, we obtain an ROA estimate with radius $\hat r = 0.181$. For the image-based controller, our analysis leads to a slightly improved ROA radius of $\hat r = 0.283$. 
From Figure \ref{fig:cartpole_roa_compare}, we can see that both estimates are reasonable although they are not rigorous inner approximations of the true ROAs.
The true ROAs for the state-feedback and image-based controllers look different, and this is expected due to the different policy paramterization used in these two settings. From Figure \ref{fig:cartpole_roa_compare}, the estimated ROAs from our approach  
 contain points which do not belong to the true ROAs. It seems that this is due to either the gap between the true ROA and the $(T,\delta)$-AROA or the existence of local solutions for \eqref{eq:maxGT}.
 Our ROA estimates are still meaningful in capturing the resilience property of the perception-based controllers.

\begin{figure}
    \centering
    \includegraphics[width=0.49\textwidth]{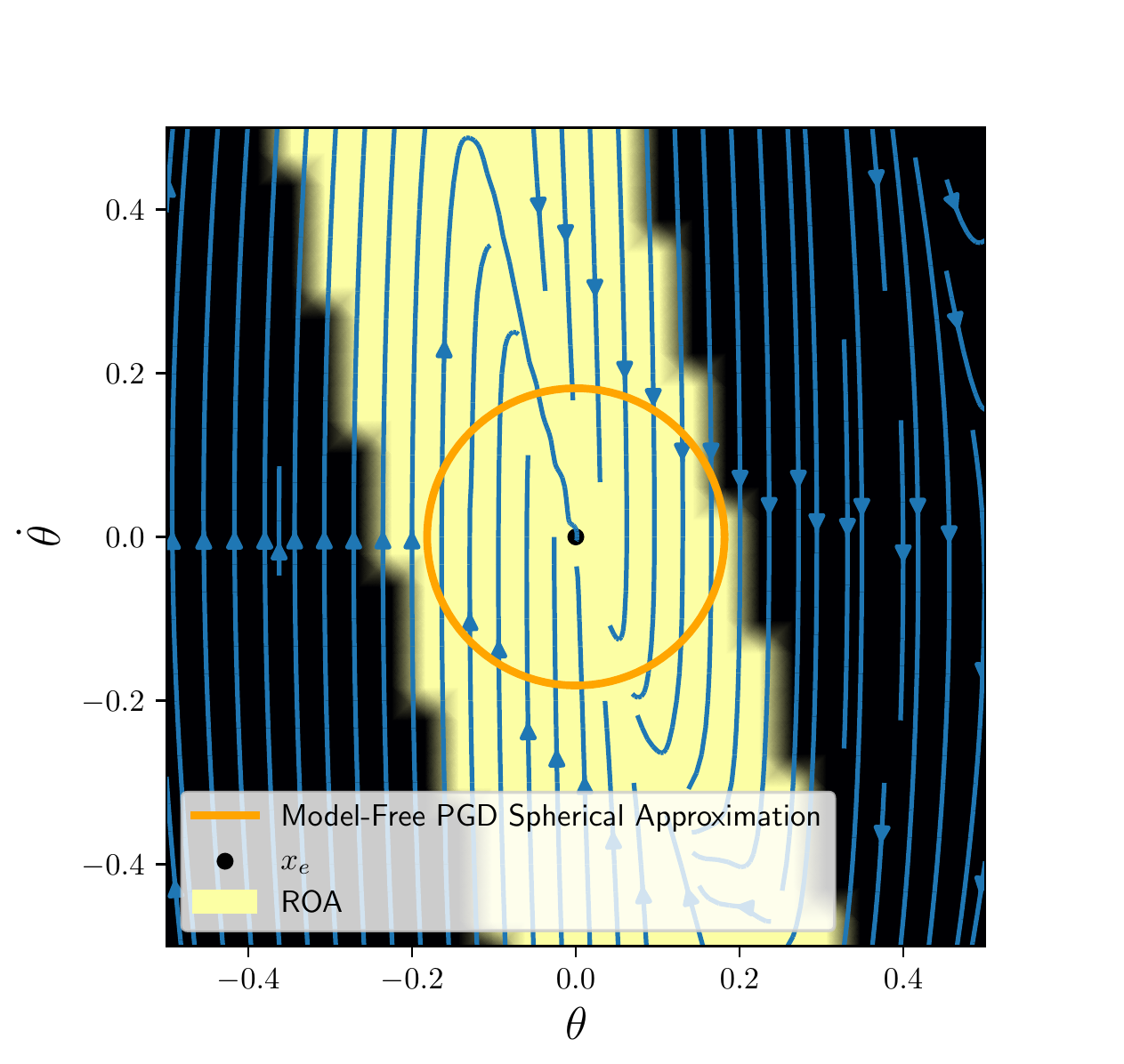}
    \includegraphics[width=0.49\textwidth]{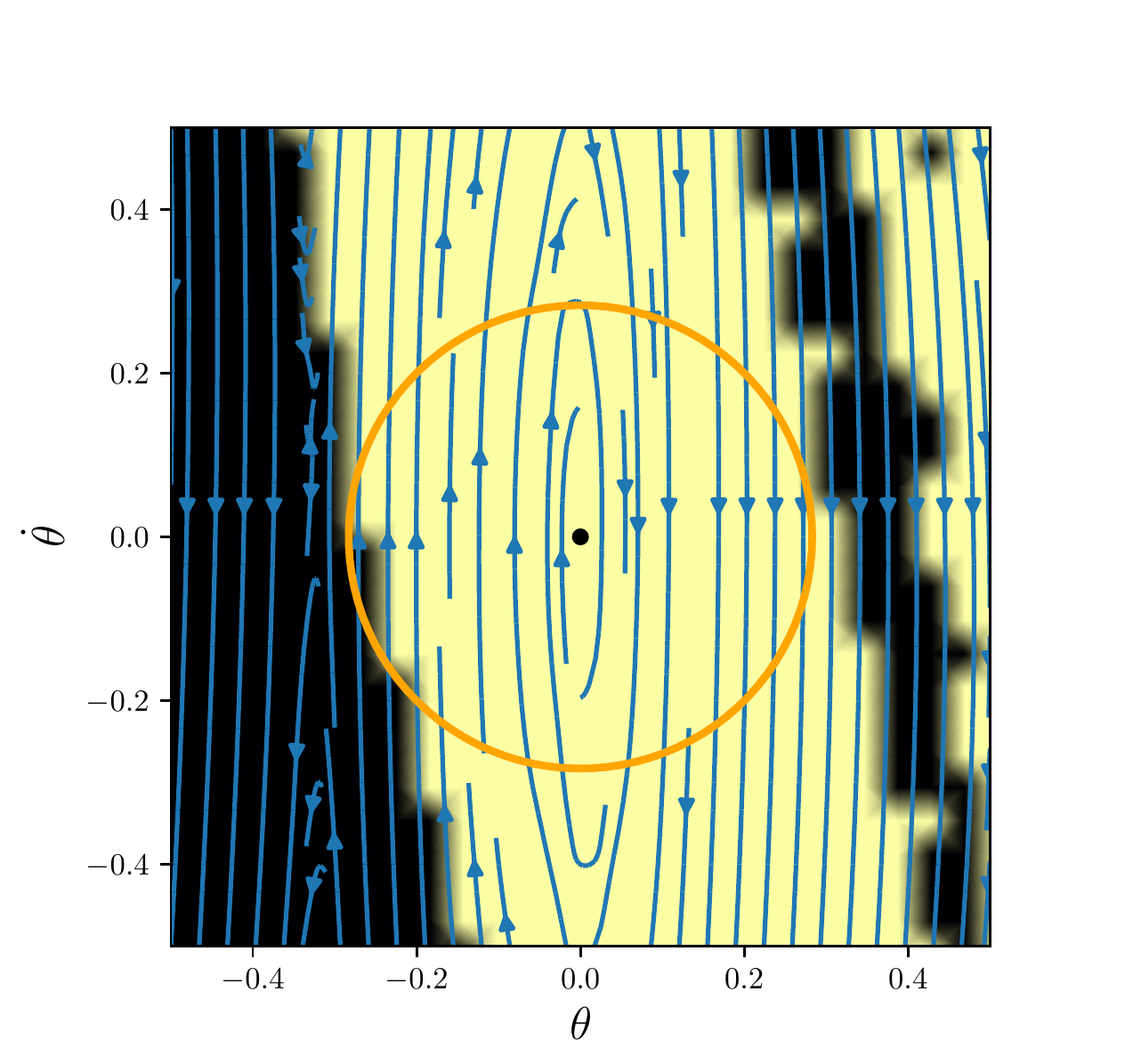}
    \caption{We compare the ROA spherical approximations found by our approach in both state-feedback and perception-based settings on the single cart-pole systems. On the \textbf{left} we have the ROA estimate for state-based controller and on the \textbf{right} is the image-based feedback ROA estimate found by model-free method. This two-dimensional slice of the four-dimensional ROA is determined by fixing the cart position and velocity to be zero.}
    \label{fig:cartpole_roa_compare}
\end{figure}
\paragraph{Double-link case.}
We move one step further and consider the cartpole system with double links. It is expected that the double-link cartpole system is more difficult to stabilize and the resultant ROA should be smaller. 
Again, we use the default setting in the Deepmind Control Suite for the purpose of simulation.
Because this $6$-dimensional ROA estimate is difficult to visualize, here we simply report the radius for the spherical ROA approximations obtained. For the state-feedback controller, we have the estimate $\hat r = 0.034$. The image-based controller yielded a slightly improved ROA radius of $\hat r = 0.077$. Both estimates are of relatively small value when compared to the pendulum example. However, unlike the pendulum, the double-link cart-pole is difficult to stabilize in the first place, since it is under-actuated and highly-sensitive to changes in the initial condition. Based on this intuition, the obtained ROA estimates are reasonable and as expected.

\subsection{Cubic Systems with High-Dimensional States}\label{sec:cubic}

The previous examples demonstrated the capability of our proposed PGD method in addressing unknown image generation mapping $h$ and large-scale NN policies. However, the system states in those examples are low dimensional. 
Here, we revisit the cubic system example from~\cite{cubic_example} to showcase the scalability of our proposed method when the system state is high-dimensional.
 Specifically, we consider
the system $x_{t+1}=\tilde{f}(x_t)$, which is discretized from the continuous-time cubic ODE
$\dot x(t) = (1 - x(t)^\top M x(t))F x(t)$,
where $M$ can be any positive definite (PD) matrix, and $F=-I$. We choose the sampling time as $dt=0.1$. Therefore, $\tilde{f}$ generates $x_{t+1}$ by solving for the state of the cubic continuous-time ODE at $dt$ starting from initial time $0$ and initial state $x_t$. One can simulate $\tilde{f}$ using the the Runge-Kutta method.
This cubic example
has a known ROA given by the ellipsoid $\mathcal{R} = \{x_0 \in \mathbb{R}^{n_x} : x_0^\top M x_0 < 1 \}$, and
was used to study the scalability of SOS \citep{cubic_example}. 
It is found that SOS has difficulty when the state dimension exceeds $10$. As a matter of fact, the original results in \cite{cubic_example} only cover the case when $n_x$ is up to $8$.
As shown in Figure \ref{fig:roa_scaling} (which will be explained in the next paragraph), we will demonstrate that our PGD analysis can be applied when the state dimension is above $1000$.

\begin{figure}[ht!]
    \centering
    \includegraphics[width=1.0\textwidth]{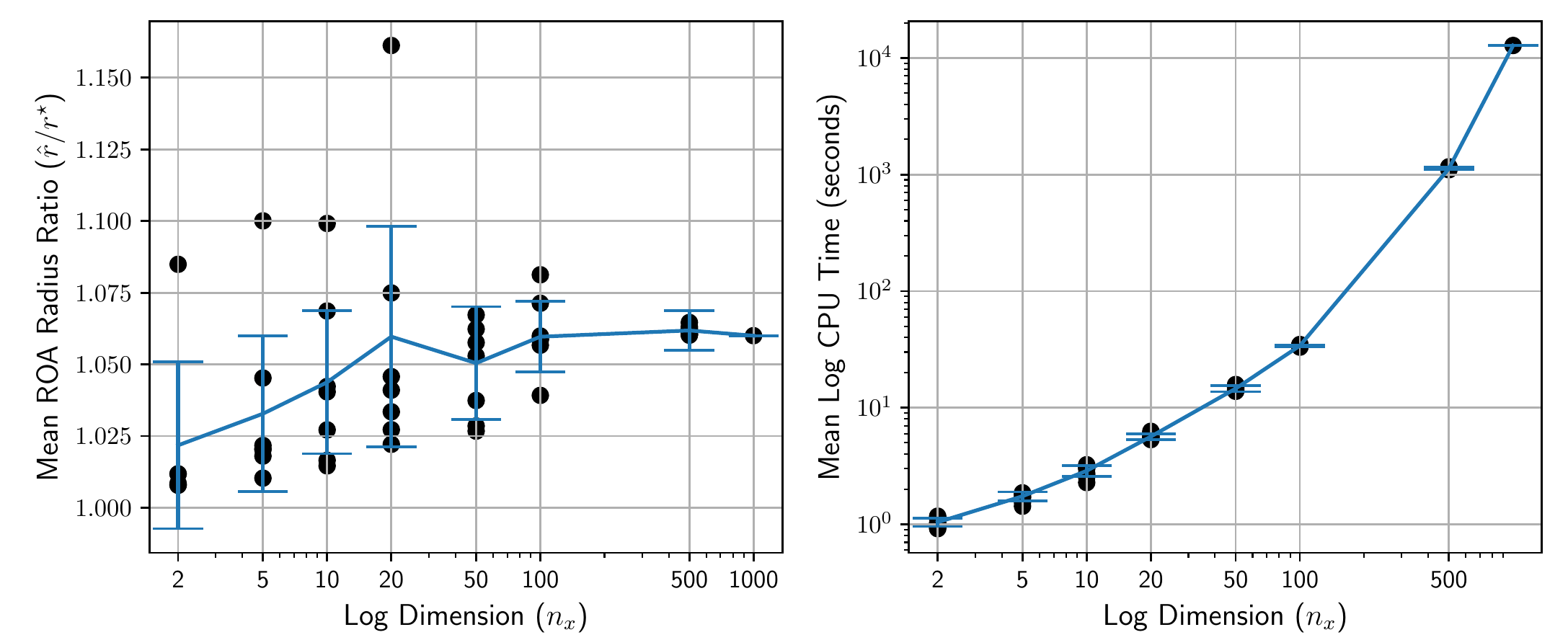}
    \caption{\textbf{left}: For each state dimension $n_x$, we report the mean and one standard deviation over $N=10$ samples for the spherical ROA approximation found by our method. Here $\hat r$ is the estimate and $r^\star$ is the true largest spherical region about the origin that is a subset of the true ellipsoidal ROA. \textbf{right}: The mean CPU time is reported with a $\log$-$\log$ scaling. Note that only a single sample for $n_x = 1000$ was taken and no statistics are reported.}
    \label{fig:roa_scaling}
\end{figure}

Now we present the details for our scalability study.
If we let $M$ be an arbitrary PD matrix and choose $C=I$ in our ROA analysis, then the best inner approximation of the true ROA is given by  a sphere of radius $r^\star := \lambda_{\max}^{-1/2}$, where $\lambda_{\max}$ is the largest eigenvalue of $M$. In this way, we can obtain a spherical ROA ground-truth for systems of arbitrarily high dimension. We can then benchmark our PGD approach by computing the radius $\hat r$ of our spherical ROA approximation which can be compared with $r^\star$. Our ROA approximation will be good if $\frac{\hat r}{r^*}$ is close to $1$. If $\frac{\hat r}{r^*}$ is smaller than $1$, then we have obtained an inner-approximation of the true spherical ROA.
Otherwise our analysis results will be outer approximations of the true ROA. 
This will allow us to demonstrate high-dimensional state cases that can not be solved by SOS but still have a known ground-truth ROA to compare against. 
 In principle, any Hurwitz matrix $F$ and PD matrix $M$ may be used for this example as long as $(F,M)$ satisfy the Lyapunov equation $F^\top M + M F = -Q$ for some PD matrix $Q$. For simplicity and freedom to choose arbitrary ROA parameterization $M$, we use $F=-I$.
 For this experiment, we apply our PGD analysis with finite-difference gradient estimates to randomly generated cases with different state dimensions, i.e. $n_x\in\{2,5,10,20,50,100,500,1000\}$. We fix $T=100$ and $\delta=10^{-2}$.
For each case, we compute the radius $\hat{r}$ and plot the ratio $\frac{\hat{r}}{r^\star}$ in Figure \ref{fig:roa_scaling}, which demonstrates that our approach scales reasonably well as the state dimension increases.
For the state-dimensions up to $500$, we take $10$ randomly generated samples for each dimension. We then take the mean and standard deviations of the approximate radius found by our PGD method, $\hat r$, normalized by the best inner approximation of the true ROA $r^\star$. 
 We also run the PGD method for the case of $n_x=1000$, but only take a single sample for the sake of exposition. This data, along with the mean CPU time taken to run the PGD method on local laptop machine can be found in Figure~\ref{fig:roa_scaling}. It is impressive that the analysis for this case can be run on a laptop within four hours.

 Though our analysis tends to slightly over-estimate the ROA due to our choice of $\delta$, the performance of estimate in terms of mean and variance is more or less consistent. This experiment shows that our method is reasonably effective for high-dimensional problems with failure cases that may be addressed with more careful case-by-case hyper-parameter tuning.
 


\section{Concluding Remarks and Future Work}

In this paper, we tailor the PGD attack as a general-purpose analysis tool for feedback systems with large-scale NN policies and/or high-dimensional sensing modalities such as cameras. We reformulate the ROA analysis as a constrained maximization problem. Such a formulation is not Lyapunov-based, and hence yields a minimum requirement on the problem structure. Then PGD and the model-free variant based on finite difference estimation can be applied to address the ROA analysis in the presence of large-scale NN policies, high-dimensional image observation, and complex coupling between states at different time steps. 

An important future task is to extend the PGD attack for input-output gain analysis which is important for robust control. Recently, the $\mathcal{H}_\infty$ input-output gain has been used in robust reinforcement learning \citep{han2019RL,zhang2020policy,zhang2020stability,donti2020enforcing,zhang2021derivative}. It has been shown that an efficient input-output gain analysis can be combined with adversarial reinforcement learning \citep{pinto2017robust} to improve the robustness in the linear control setting \citep{keivan2021model}. A general-purpose input-output gain analysis will play a crucial role for the developments of robust DRL in the nonlinear or perception-based setting. Hence our next step is to investigate the connections between PGD attacks and input-output gain analysis. One main challenge is that the decision variable dimension of the worst-case disturbance in an input-output gain analysis will be proportional to the approximation window length $T$. This may cause a scalability issue when applying PGD to search for the worst-case disturbance.

\acks{D. Keivan and G. Dullerud are partially funded by NSF
under the grant ECCS 19-32735. A. Havens and B. Hu are
generously supported by the NSF award CAREER-2048168
and the 2020 Amazon research award. P. Seiler is supported
by the US ONR grant N00014-18-1-2209.}

\bibliography{bibtex}

\begin{thebibliography}{40}
\providecommand{\natexlab}[1]{#1}
\providecommand{\url}[1]{\texttt{#1}}
\expandafter\ifx\csname urlstyle\endcsname\relax
  \providecommand{\doi}[1]{doi: #1}\else
  \providecommand{\doi}{doi: \begingroup \urlstyle{rm}\Url}\fi

\bibitem[Aydinoglu et~al.(2021)Aydinoglu, Fazlyab, Morari, and
  Posa]{aydinoglu2021stability}
Alp Aydinoglu, Mahyar Fazlyab, Manfred Morari, and Michael Posa.
\newblock Stability analysis of complementarity systems with neural network
  controllers.
\newblock In \emph{Proceedings of the 24th International Conference on Hybrid
  Systems: Computation and Control}, pages 1--10, 2021.

\bibitem[Bradbury et~al.(2018)Bradbury, Frostig, Hawkins, Johnson, Leary,
  Maclaurin, Necula, Paszke, Vander{P}las, Wanderman-{M}ilne, and
  Zhang]{jax2018github}
James Bradbury, Roy Frostig, Peter Hawkins, Matthew~James Johnson, Chris Leary,
  Dougal Maclaurin, George Necula, Adam Paszke, Jake Vander{P}las, Skye
  Wanderman-{M}ilne, and Qiao Zhang.
\newblock {JAX}: composable transformations of {P}ython+{N}um{P}y programs,
  2018.
\newblock URL \url{http://github.com/google/jax}.

\bibitem[Chang et~al.(2019)Chang, Roohi, and Gao]{chang2019neural}
Ya-Chien Chang, Nima Roohi, and Sicun Gao.
\newblock Neural lyapunov control.
\newblock \emph{Advances in neural information processing systems}, 2019.

\bibitem[Chen et~al.(2020)Chen, Fazlyab, Morari, Pappas, and
  Preciado]{chen2020learning}
Shaoru Chen, Mahyar Fazlyab, Manfred Morari, George~J Pappas, and Victor~M
  Preciado.
\newblock Learning lyapunov functions for piecewise affine systems with neural
  network controllers.
\newblock \emph{arXiv preprint arXiv:2008.06546}, 2020.

\bibitem[Chen et~al.(2021)Chen, Fazlyab, Morari, Pappas, and
  Preciado]{chen2021learning}
Shaoru Chen, Mahyar Fazlyab, Manfred Morari, George~J Pappas, and Victor~M
  Preciado.
\newblock Learning region of attraction for nonlinear systems.
\newblock \emph{arXiv preprint arXiv:2110.00731}, 2021.

\bibitem[Dai et~al.(2021)Dai, Landry, Yang, Pavone, and
  Tedrake]{dai2021lyapunov}
Hongkai Dai, Benoit Landry, Lujie Yang, Marco Pavone, and Russ Tedrake.
\newblock Lyapunov-stable neural-network control.
\newblock \emph{arXiv preprint arXiv:2109.14152}, 2021.

\bibitem[Donti et~al.(2020)Donti, Roderick, Fazlyab, and
  Kolter]{donti2020enforcing}
Priya~L Donti, Melrose Roderick, Mahyar Fazlyab, and J~Zico Kolter.
\newblock Enforcing robust control guarantees within neural network policies.
\newblock In \emph{International Conference on Learning Representations}, 2020.

\bibitem[Giesl et~al.(2020)Giesl, Hamzi, Rasmussen, and
  Webster]{giesl2020approximation}
Peter Giesl, Boumediene Hamzi, Martin Rasmussen, and Kevin Webster.
\newblock Approximation of lyapunov functions from noisy data.
\newblock \emph{Journal of Computational Dynamics}, 7\penalty0 (1):\penalty0
  57, 2020.

\bibitem[Goodfellow et~al.(2014)Goodfellow, Shlens, and
  Szegedy]{goodfellow2014explaining}
Ian~J Goodfellow, Jonathon Shlens, and Christian Szegedy.
\newblock Explaining and harnessing adversarial examples.
\newblock \emph{arXiv preprint arXiv:1412.6572}, 2014.

\bibitem[Haarnoja et~al.(2018)Haarnoja, Zhou, Hartikainen, Tucker, Ha, Tan,
  Kumar, Zhu, Gupta, Abbeel, et~al.]{haarnoja2018soft}
Tuomas Haarnoja, Aurick Zhou, Kristian Hartikainen, George Tucker, Sehoon Ha,
  Jie Tan, Vikash Kumar, Henry Zhu, Abhishek Gupta, Pieter Abbeel, et~al.
\newblock Soft actor-critic algorithms and applications.
\newblock \emph{arXiv preprint arXiv:1812.05905}, 2018.

\bibitem[Hafner et~al.(2019)Hafner, Lillicrap, Ba, and
  Norouzi]{hafner2019dream}
Danijar Hafner, Timothy Lillicrap, Jimmy Ba, and Mohammad Norouzi.
\newblock Dream to control: Learning behaviors by latent imagination.
\newblock \emph{arXiv preprint arXiv:1912.01603}, 2019.

\bibitem[Han et~al.(2019)Han, Tian, Zhang, Wang, and Pan]{han2019RL}
Minghao Han, Yuan Tian, Lixian Zhang, Jun Wang, and Wei Pan.
\newblock $\mathcal{H}_\infty$ model-free reinforcement learning with robust
  stability guarantee.
\newblock \emph{arXiv preprint arXiv:1911.02875}, 2019.

\bibitem[Hu et~al.(2020)Hu, Fazlyab, Morari, and Pappas]{hu2020reach}
Haimin Hu, Mahyar Fazlyab, Manfred Morari, and George~J Pappas.
\newblock Reach-sdp: Reachability analysis of closed-loop systems with neural
  network controllers via semidefinite programming.
\newblock In \emph{2020 59th IEEE Conference on Decision and Control (CDC)},
  pages 5929--5934, 2020.

\bibitem[Jin and Lavaei(2020)]{jin2020stability}
Ming Jin and Javad Lavaei.
\newblock Stability-certified reinforcement learning: A control-theoretic
  perspective.
\newblock \emph{IEEE Access}, 8:\penalty0 229086--229100, 2020.

\bibitem[Jin et~al.(2020)Jin, Wang, Yang, and Mou]{jin2020neural}
Wanxin Jin, Zhaoran Wang, Zhuoran Yang, and Shaoshuai Mou.
\newblock Neural certificates for safe control policies.
\newblock \emph{arXiv preprint arXiv:2006.08465}, 2020.

\bibitem[Keivan et~al.(2021)Keivan, Havens, Seiler, Dullerud, and
  Hu]{keivan2021model}
Darioush Keivan, Aaron Havens, Peter Seiler, Geir Dullerud, and Bin Hu.
\newblock Model-free $\mu$ synthesis via adversarial reinforcement learning.
\newblock \emph{arXiv preprint arXiv:2111.15537}, 2021.

\bibitem[Kenanian et~al.(2019)Kenanian, Balkan, Jungers, and
  Tabuada]{kenanian2019data}
Joris Kenanian, Ayca Balkan, Raphael~M Jungers, and Paulo Tabuada.
\newblock Data driven stability analysis of black-box switched linear systems.
\newblock \emph{Automatica}, 109:\penalty0 108533, 2019.

\bibitem[Kurakin et~al.(2016)Kurakin, Goodfellow, Bengio,
  et~al.]{kurakin2016adversarial}
Alexey Kurakin, Ian Goodfellow, Samy Bengio, et~al.
\newblock Adversarial examples in the physical world, 2016.

\bibitem[Lee et~al.(2017)Lee, Levine, and Abbeel]{lee2017learning}
Alex~X Lee, Sergey Levine, and Pieter Abbeel.
\newblock Learning visual servoing with deep features and fitted q-iteration.
\newblock \emph{arXiv preprint arXiv:1703.11000}, 2017.

\bibitem[Lee et~al.(2019)Lee, Nagabandi, Abbeel, and Levine]{lee2019stochastic}
Alex~X Lee, Anusha Nagabandi, Pieter Abbeel, and Sergey Levine.
\newblock Stochastic latent actor-critic: Deep reinforcement learning with a
  latent variable model.
\newblock \emph{arXiv preprint arXiv:1907.00953}, 2019.

\bibitem[Li et~al.(2018)Li, Chen, and Tai]{li2018maximum}
Qianxiao Li, Long Chen, and Cheng Tai.
\newblock Maximum principle based algorithms for deep learning.
\newblock \emph{Journal of Machine Learning Research}, 18:\penalty0 1--29,
  2018.

\bibitem[Lillicrap et~al.(2015)Lillicrap, Hunt, Pritzel, Heess, Erez, Tassa,
  Silver, and Wierstra]{lillicrap2015continuous}
Timothy~P Lillicrap, Jonathan~J Hunt, Alexander Pritzel, Nicolas Heess, Tom
  Erez, Yuval Tassa, David Silver, and Daan Wierstra.
\newblock Continuous control with deep reinforcement learning.
\newblock \emph{arXiv preprint arXiv:1509.02971}, 2015.

\bibitem[Madry et~al.(2017)Madry, Makelov, Schmidt, Tsipras, and
  Vladu]{madry2017towards}
Aleksander Madry, Aleksandar Makelov, Ludwig Schmidt, Dimitris Tsipras, and
  Adrian Vladu.
\newblock Towards deep learning models resistant to adversarial attacks.
\newblock \emph{arXiv preprint arXiv:1706.06083}, 2017.

\bibitem[Paszke et~al.(2019)Paszke, Gross, Massa, Lerer, Bradbury, Chanan,
  Killeen, Lin, Gimelshein, Antiga, Desmaison, Kopf, Yang, DeVito, Raison,
  Tejani, Chilamkurthy, Steiner, Fang, Bai, and Chintala]{paszke2019pytorch}
Adam Paszke, Sam Gross, Francisco Massa, Adam Lerer, James Bradbury, Gregory
  Chanan, Trevor Killeen, Zeming Lin, Natalia Gimelshein, Luca Antiga, Alban
  Desmaison, Andreas Kopf, Edward Yang, Zachary DeVito, Martin Raison, Alykhan
  Tejani, Sasank Chilamkurthy, Benoit Steiner, Lu~Fang, Junjie Bai, and Soumith
  Chintala.
\newblock Pytorch: An imperative style, high-performance deep learning library.
\newblock In H.~Wallach, H.~Larochelle, A.~Beygelzimer, F.~d\textquotesingle
  Alch\'{e}-Buc, E.~Fox, and R.~Garnett, editors, \emph{Advances in Neural
  Information Processing Systems 32}, pages 8024--8035. Curran Associates,
  Inc., 2019.

\bibitem[Pinto et~al.(2017)Pinto, Davidson, Sukthankar, and
  Gupta]{pinto2017robust}
Lerrel Pinto, James Davidson, Rahul Sukthankar, and Abhinav Gupta.
\newblock Robust adversarial reinforcement learning.
\newblock In \emph{International Conference on Machine Learning}, pages
  2817--2826, 2017.

\bibitem[Ravanbakhsh and Sankaranarayanan(2019)]{ravanbakhsh2019learning}
Hadi Ravanbakhsh and Sriram Sankaranarayanan.
\newblock Learning control lyapunov functions from counterexamples and
  demonstrations.
\newblock \emph{Autonomous Robots}, 43\penalty0 (2):\penalty0 275--307, 2019.

\bibitem[Richards et~al.(2018)Richards, Berkenkamp, and
  Krause]{richards2018lyapunov}
Spencer~M Richards, Felix Berkenkamp, and Andreas Krause.
\newblock The lyapunov neural network: Adaptive stability certification for
  safe learning of dynamical systems.
\newblock In \emph{Conference on Robot Learning}, pages 466--476, 2018.

\bibitem[Schulman et~al.(2015)Schulman, Moritz, Levine, Jordan, and
  Abbeel]{schulman2015high}
John Schulman, Philipp Moritz, Sergey Levine, Michael Jordan, and Pieter
  Abbeel.
\newblock High-dimensional continuous control using generalized advantage
  estimation.
\newblock \emph{arXiv preprint arXiv:1506.02438}, 2015.

\bibitem[Silver et~al.(2016)Silver, Huang, Maddison, Guez, Sifre, Van
  Den~Driessche, Schrittwieser, Antonoglou, Panneershelvam, Lanctot,
  et~al.]{silver2016mastering}
David Silver, Aja Huang, Chris~J Maddison, Arthur Guez, Laurent Sifre, George
  Van Den~Driessche, Julian Schrittwieser, Ioannis Antonoglou, Veda
  Panneershelvam, Marc Lanctot, et~al.
\newblock Mastering the game of {G}o with deep neural networks and tree search.
\newblock \emph{Nature}, 529\penalty0 (7587):\penalty0 484--489, 2016.

\bibitem[Szegedy et~al.(2013)Szegedy, Zaremba, Sutskever, Bruna, Erhan,
  Goodfellow, and Fergus]{szegedy2013intriguing}
Christian Szegedy, Wojciech Zaremba, Ilya Sutskever, Joan Bruna, Dumitru Erhan,
  Ian Goodfellow, and Rob Fergus.
\newblock Intriguing properties of neural networks.
\newblock \emph{arXiv preprint arXiv:1312.6199}, 2013.

\bibitem[Tan and Packard(2008)]{cubic_example}
Weehong Tan and Andrew Packard.
\newblock Stability region analysis using polynomial and composite polynomial
  lyapunov functions and sum-of-squares programming.
\newblock \emph{IEEE Transactions on Automatic Control}, 53\penalty0
  (2):\penalty0 565--571, 2008.
\newblock \doi{10.1109/TAC.2007.914221}.

\bibitem[Tierno et~al.(1997)Tierno, Murray, Doyle, and
  Gregory]{tierno1997numerically}
Jorge~E Tierno, Richard~M Murray, John~C Doyle, and Irene~M Gregory.
\newblock Numerically efficient robustness analysis of trajectory tracking for
  nonlinear systems.
\newblock \emph{Journal of guidance, control, and dynamics}, 20\penalty0
  (4):\penalty0 640--647, 1997.

\bibitem[Tunyasuvunakool et~al.(2020)Tunyasuvunakool, Muldal, Doron, Liu,
  Bohez, Merel, Erez, Lillicrap, Heess, and
  Tassa]{tunyasuvunakool2020dm_control}
Saran Tunyasuvunakool, Alistair Muldal, Yotam Doron, Siqi Liu, Steven Bohez,
  Josh Merel, Tom Erez, Timothy Lillicrap, Nicolas Heess, and Yuval Tassa.
\newblock dm\_control: Software and tasks for continuous control.
\newblock \emph{Software Impacts}, 6:\penalty0 100022, 2020.

\bibitem[Xu et~al.(2021)Xu, Lee, Matni, and Jayaraman]{xu2021learned}
Jingxi Xu, Bruce Lee, Nikolai Matni, and Dinesh Jayaraman.
\newblock How are learned perception-based controllers impacted by the limits
  of robust control?
\newblock In \emph{Learning for Dynamics and Control}, pages 954--966. PMLR,
  2021.

\bibitem[Yarats et~al.(2019)Yarats, Zhang, Kostrikov, Amos, Pineau, and
  Fergus]{yarats2019improving}
Denis Yarats, Amy Zhang, Ilya Kostrikov, Brandon Amos, Joelle Pineau, and Rob
  Fergus.
\newblock Improving sample efficiency in model-free reinforcement learning from
  images.
\newblock \emph{arXiv preprint arXiv:1910.01741}, 2019.

\bibitem[Yarats et~al.(2021)Yarats, Fergus, Lazaric, and
  Pinto]{yarats2021mastering}
Denis Yarats, Rob Fergus, Alessandro Lazaric, and Lerrel Pinto.
\newblock Mastering visual continuous control: Improved data-augmented
  reinforcement learning.
\newblock \emph{arXiv preprint arXiv:2107.09645}, 2021.

\bibitem[Yin et~al.(2021)Yin, Seiler, and Arcak]{yin2021stability}
He~Yin, Peter Seiler, and Murat Arcak.
\newblock Stability analysis using quadratic constraints for systems with
  neural network controllers.
\newblock \emph{IEEE Transactions on Automatic Control}, 2021.

\bibitem[Zhang et~al.(2020{\natexlab{a}})Zhang, Hu, and
  Ba{\c{s}}ar]{zhang2020policy}
Kaiqing Zhang, Bin Hu, and Tamer Ba{\c{s}}ar.
\newblock Policy optimization for $\mathcal{H}_2$ linear control with
  $\mathcal{H}_\infty$ robustness guarantee: Implicit regularization and global
  convergence.
\newblock In \emph{Learning for Dynamics and Control}, pages 179--190,
  2020{\natexlab{a}}.

\bibitem[Zhang et~al.(2020{\natexlab{b}})Zhang, Hu, and
  Ba{\c{s}}ar]{zhang2020stability}
Kaiqing Zhang, Bin Hu, and Tamer Ba{\c{s}}ar.
\newblock On the stability and convergence of robust adversarial reinforcement
  learning: A case study on linear quadratic systems.
\newblock \emph{Advances in Neural Information Processing Systems}, 33,
  2020{\natexlab{b}}.

\bibitem[Zhang et~al.(2021)Zhang, Zhang, Hu, and
  Ba{\c{s}}ar]{zhang2021derivative}
Kaiqing Zhang, Xiangyuan Zhang, Bin Hu, and Tamer Ba{\c{s}}ar.
\newblock Derivative-free policy optimization for linear risk-sensitive and
  robust control design: Implicit regularization and sample complexity.
\newblock In \emph{Thirty-Fifth Conference on Neural Information Processing
  Systems}, 2021.

\end{thebibliography}

\clearpage
\appendix
\begin{center}
{\Large\bf Supplementary Material}
\end{center}

\section{More Discussion on PGD with Alternative Norm Projections}
\label{sec:A1}
It is well understood that the projection to the $\ell_p$ ball ($p=1,2,\infty$) can be efficiently computed, and hence the update rule \eqref{eq:PGD} can be easily implemented for all three cases. Now we discuss how to implement the update rule \eqref{eq:expan2} for the cases where $p=1$ or $\infty$.

When $p=1$, we need to maximize a linear function subject to an equality constraint $\norm{C\xi}_1=r$. We denote $\tilde{\xi}=C\xi$. Then the update rule \eqref{eq:expan2} is equivalent to
\begin{align*}
\xi^{k+1}=C^{-1}\argmax_{\norm{\tilde{\xi}}_1=r}\left\{\nabla L_T(\xi^k)^\top C^{-1} \tilde{\xi}\right\}
\end{align*}
The above problem yields a simple closed-form solution. Suppose the $i$-th entry of $C^{-\top}\nabla L_T(\xi^k)$ has the largest absolute value among all entries of $C^{-\top}\nabla L_T(\xi^k)$. Then we have $\xi^{k+1}=C^{-1} \tilde{\xi}^{k+1}$ where $\tilde{\xi}^{k+1}$ is a vector whose entries are all $0$ except the $i$-th entry which has a magnitude equal to $r$ and the same sign as the $i$-th entry of $C^{-\top}\nabla L_T(\xi^k)$.

When $p=\infty$, \eqref{eq:expan2} is equivalent to the following problem
\begin{align*}
\xi^{k+1}=C^{-1}\argmax_{\norm{\tilde{\xi}}_\infty=r}\left\{\nabla L_T(\xi^k)^\top C^{-1} \tilde{\xi}\right\}
\end{align*}
which also has an analytical solution. Specifically, we have $\xi^{k+1}=C^{-1} \tilde{\xi}^{k+1}$ where the $j$-th entry of $\tilde{\xi}^{k+1}$ ($j=1,2,\ldots,n_x$) has a magnitude $r$ and the same sign as the $j$-th entry of $C^{-\top}\nabla L_T(\xi^k)$.


\begin{figure}[b!]
    \centering
    \includegraphics[width=1.0\textwidth]{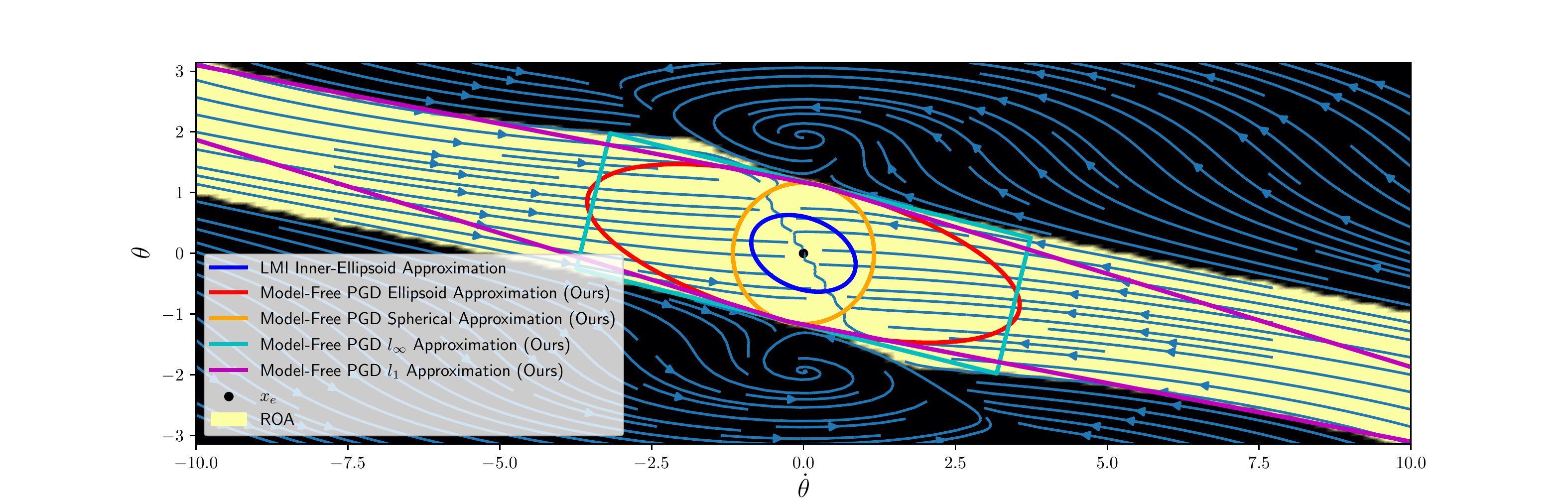}
    \caption{Using the same inverted-pendulum system in feedback with a NN controller, we examine the ROA estimates generated by our PGD-based method using $\ell_1$ and $\ell_\infty$ norm constraints. Since we are only in two dimensions, we can search for scaling parameters that significantly improve on the spherical and ellipsoid estimates, especially in the case of the $\ell_1$ norm constraint.}
    \label{fig:l1_linf}
\end{figure}

It is worth considering $\ell_1$ and $\ell_\infty$ norms, since in many cases, especially in higher dimensions, these norms can yield greater volume ROA estimates. By revisiting the inverted-pendulum case of section~\ref{sec:inverted_pendulum}, we can observe significant improvements for the ROA using the $\ell_1$ and $\ell_\infty$ norm constraints. These results are presented in Figure~\ref{fig:l1_linf}.

\section{Comparison of Convergence for PGD Implementations}
In section~\ref{sec:pgd_attack}, two possible implementations are proposed for finding the worst-case initial condition that maximizes the norm of the final state as in problem~\eqref{eq:maxGT}. The first approach in equation~\eqref{eq:PGD} is the standard PGD method, where the next initial condition is determined by taking a step in the direction of the gradient and then projecting back to the feasible set. The second approach approximates the cost objective using a linear expansion about the previous iterate and provides a closed-form solution~\eqref{eq:PI} when the initial condition is constrained to the boundary of the ellipsoid region. Throughout our results so far, we have only used the later closed-form update rule.

In this section, we provide further empirical study on the performance of each approach and how their convergence behavior relates to the choice of the finite time horizon $T$. Using the inverted-pendulum system~\eqref{eq:inverted_pen_dyn} in feedback with the NN state-feedback policy $K$, we evaluate each approach given the same initial condition guess $x_0$ for an ellipsoid region which just barely intersects the unstable region. 
In figure~\ref{fig:pgd_vs_power}, it can be shown that the convergence speed of the update rule \eqref{eq:PGD} appears to be much more sensitive to the time horizon $T$, since larger time-horizons effectively make the gradient magnitude smaller as the corresponding final states begin to converge to the equilibrium asymptotically. On the other-hand, the update rule \eqref{eq:PI} appears to not depend on $T$ and converges faster over all choices of time-horizon $T$. This is beneficial to our application since $T$ determines, in some sense, the accuracy of our notion of asymptotic stability and in turn the accuracy of the ROA estimate.

\begin{figure}[b!]
    \centering
    \includegraphics[width=0.95\textwidth]{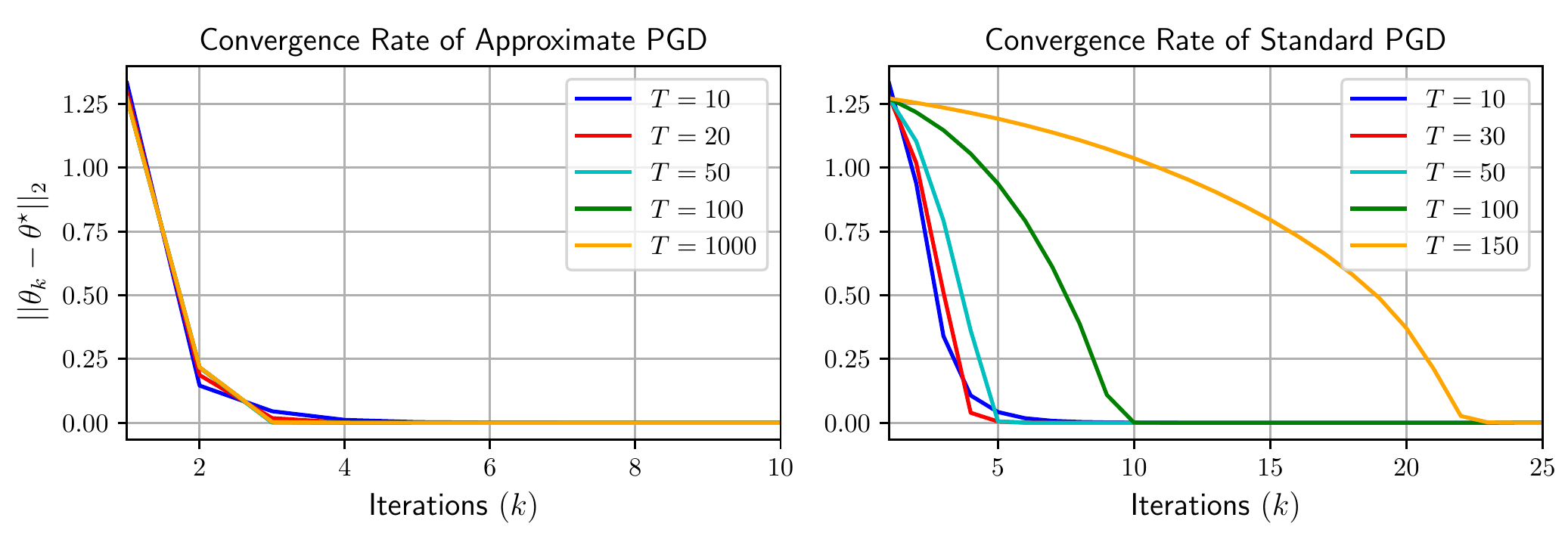}
    \caption{The convergence behavior of the standard PGD update rule~\eqref{eq:PGD} and the approximate closed-form PGD update~\eqref{eq:PI} are compared. For both methods, we use the same inverted-pendulum with state-feedback NN controller and an identical initial condition. The $y$-axis denotes the euclidean distance from the current worst-case initial state guess to the nearest unstable initial state. \textbf{left}: The closed-form update rule is evaluated for several time horizons $T$ and convergence roughly at the same rate independent of $T$. The closed-form update is generally faster than the PGD approach for all choices of $T$. \textbf{right}: Standard PGD is evaluated for several $T$, however the number of iterates required to reach a solution quickly grows with $T$. This is primarily because, as $T$ grows, the gradient at a given initial state in the ROA will become smaller as the trajectory converges to the equilibrium point.}
    \label{fig:pgd_vs_power}
\end{figure}

\end{document}